\documentclass{amsart}

\usepackage{geometry}
\usepackage{setspace}
\geometry{a4paper} 

\title{Remarks on automorphisms of $\CC^*\times\CC^*$ and their basins}
\date{August 15, 2008}
\author{Liz Raquel Vivas} 
\address{Department of Mathematic, University of Michigan, Ann Arbor, MI 48109}
\email{lvivas@umich.edu}

\usepackage{amscd}

\newtheorem{theorem}{Theorem}

\newtheorem{proposition}[theorem]{Proposition}

\newtheorem{conjecture}[theorem]{Conjecture}
\newtheorem{question}[theorem]{Question}

\theoremstyle{definition}
\newtheorem{defin}[theorem]{Definition}

\theoremstyle{remark}
\newtheorem{remark}[theorem]{Remark}

\newcommand{\CC}{\mathbb{C}}

\def\Aut{{\mathrm{Aut}}}

\begin{document}

\bibliographystyle{plain}

\begin{abstract}
We study basins of attraction of automorphisms of $\CC^2$ tangent to the identity that fix both axes. Our main result is that, if a well known conjecture about automorphisms of $\CC^*\times\CC^*$ holds, then there are no basins of attraction associated to the non-degenerate characteristic directions (in the sense of Hakim), and therefore we cannot find a Fatou-Bieberbach domain that does not intersect both axis with this method.
\end{abstract}

\maketitle

\section{Introduction}

A Fatou-Bieberbach domain is a proper domain of $\CC^k$ that is biholomorphic to $\CC^k$. These domains have been extensively studied, but many questions about them are still open ~\cite{rr}. 

One of these open question is the following: (\cite{rr}, p. 79)
\begin{question}
Is there a biholomorphic map from $\CC^2$ into the set $\{zw \neq 0\}$ i.e. into the complement of the union of two intersecting complex lines?
\end{question}

One classical way of constructing Fatou-Bieberbach domains (i.e. biholomorphic maps from $\CC^k$ into $\CC^k$) is to find basins of attractions of automorphisms of $\CC^k$ with a fixed point, as follows:

For $F \in \Aut(\CC^k), F(p) = p, F'(p) = A$ we define
$$
\Omega_{F,p} = \{z \in \CC^k \mid \lim_{n\to\infty}F^{n}(z) = p \}.
$$
We say $F$ has an \textit{attracting} fixed point when $A$ is a matrix with eigenvalues of modulus less than~$1$. In this case $\Omega_{F,p}$ is biholomorphic to $\CC^k$ \cite{rr}.

In the \textit{semiattracting} case (eigenvalues of modulus smaller or equal than $1$), and for automorphisms \textit{tangent to the identity} (i.e., $A = Id$) then $\Omega_{F,p}$ can also be biholomorphically equivalent to $\CC^l$ (where $l \leq k$). See \cite{ue} for semi-attracting case and \cite{hak1},\cite{hak2},\cite{we} for automorphisms tangent to the identity.

Although not all Fatou-Bieberbach domains are basins of attraction of an automorphism of $\CC^k$ \cite{w5} these are the natural source of examples (and counter-examples) for various conjectures.

One natural approach, in order to answer Question 1 positively by using these  results, is the following (for the definitions see \cite{hak1} or Section 3 below):

\medskip

\textsl{
Find an automorphism $F$ of $\CC^2$ such that:
\begin{itemize}
\item $F$ is tangent to the identity i.e. $F(0) = 0$ and $DF(0) = Id$.
\item $F$ fixes the coordinate axes i.e. $F(z,0) = (z',0)$ and $F(0,w) = (0,w')$.
\end{itemize}
Then:
\begin{enumerate}
\item There exists an attracting fixed point for $F$.
\\or
\item There exists $v$ non-degenerate characteristic direction of $F$ at the origin such that $\textrm{Re} A(v) > 0$, where $A(v)$ is the number associated to the direction $v$.
\end{enumerate}
Then, in the case of (a) we would have a basin of attraction associated to this fixed point \cite{rr}, or in the case of (b) we would a basin associated to $v$ (as in Hakim's notation) by \cite[Theorem 5.1]{hak1}. In either case, this basin $\Omega$ would be a Fatou Bieberbach domain  and $\Omega \subset \{zw \neq 0\}$.
}

\medskip

Our main result is that, assuming a well known conjecture about automorphisms of $\CC^* \times \CC^*$, this approach is not possible. 

More precisely, we assume the following:

\begin{conjecture}
If $F \in \Aut(\CC^* \times \CC^*)$, then $F$ preserves the form:
\begin{eqnarray*}
\frac{dz \wedge dw}{zw}
\end{eqnarray*}
\end{conjecture}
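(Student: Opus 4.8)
\emph{A strategy for the conjecture.} Write $\omega$ for $\frac{dz\wedge dw}{zw}$. The plan is to reduce the conjecture to a statement about a Jacobian, settle that statement ``on average'' by a homological computation, and then confront the residual gap, which I expect to be the real difficulty.

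\textbf{Step 1: reduction to a Jacobian.} If $F=(f,g)\in\Aut(\CC^*\times\CC^*)$, then $f,g$ are nowhere vanishing, so $\frac{df}{f}$ and $\frac{dg}{g}$ are holomorphic closed $1$-forms and
$$
F^*\omega=\frac{df}{f}\wedge\frac{dg}{g}=J\cdot\omega
$$
for a uniquely determined nowhere-vanishing holomorphic function $J$ on $\CC^*\times\CC^*$; the conjecture is precisely that $J\equiv\pm1$. I would pass to the universal cover $\pi\colon\CC^2\to\CC^*\times\CC^*$, $\pi(u,v)=(e^u,e^v)$, under which $\pi^*\omega=du\wedge dv$. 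Then $F$ lifts to $\tilde F\in\Aut(\CC^2)$, equivariant for the deck lattice $\Lambda=(2\pi i\,\ZZ)^2$ in the sense that $\tilde F(p+\lambda)=\tilde F(p)+M\lambda$ for $\lambda\in\Lambda$, where $M\in GL_2(\ZZ)$ is the action of $F$ on $H_1$. Differentiating this relation, $\det D\tilde F$ is $\Lambda$-periodic, hence descends to $\CC^*\times\CC^*$, where it equals $J$; so the conjecture reads $\det D\tilde F\equiv\pm1$.

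\textbf{Step 2: the residue of $J$.} Since $F$ acts on $H_1(\CC^*\times\CC^*;\ZZ)\cong\ZZ^2$ by $M$, it acts on $H^2\cong\ZZ$ by $\det M=\pm1$. As $\int_{\TT^2}\omega=(2\pi i)^2\ne0$, the class of $\omega$ spans $H^2_{dR}(\CC^*\times\CC^*;\CC)$, so $F^*\omega-\det(M)\,\omega=(J-\det M)\,\omega$ is exact. On the other hand, expanding a holomorphic function in a Laurent series $\psi=\sum\psi_{mn}z^mw^n$, each form $z^mw^n\,\omega$ with $(m,n)\ne(0,0)$ is exact --- it equals $d\bigl(\tfrac1m z^mw^n\frac{dw}{w}\bigr)$ when $m\ne0$ and $d\bigl(-\tfrac1n z^mw^n\frac{dz}{z}\bigr)$ when $n\ne0$ --- whereas $\omega$ is not; hence $\psi\,\omega$ is exact if and only if $\psi_{00}=0$. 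Applied to $\psi=J-\det M$, this gives $J_{00}=\det M=\pm1$: the constant Laurent coefficient of the Jacobian is already forced.

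\textbf{Step 3: the gap, and the main obstacle.} What remains is to show that \emph{every} other Laurent coefficient of $J$ vanishes, i.e.\ that $J$ is constant; this is a rigidity statement about holomorphic automorphisms rather than a homological fact, and it is why the assertion has the status of a conjecture. The natural attack is a ``tame generation'' theorem for $\Aut(\CC^*\times\CC^*)$: that every automorphism is a composition of translations $(z,w)\mapsto(\alpha z,\beta w)$, monomial maps $(z,w)\mapsto(z^aw^b,z^cw^d)$ with $ad-bc=\pm1$, and shears $(z,w)\mapsto(z,wP(z))$ and $(z,w)\mapsto(zP(w),w)$ with $P$ a unit of $\mathcal O(\CC^*)$. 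Each of these preserves $\omega$ up to the sign $\det M$ by a one-line check --- for instance $\frac{d(wP(z))}{wP(z)}=\frac{dw}{w}+\frac{P'(z)}{P(z)}\,dz$, so $\omega$ is unchanged --- and then the cocycle identity $\rho_{F\circ G}=(\rho_F\circ G)\cdot\rho_G$, where $F^*\omega=\rho_F\,\omega$, propagates ``$\rho=\pm1$'' from the generators to all of $\Aut(\CC^*\times\CC^*)$. Establishing such an amalgamated-product (Jung--van der Kulk) structure in the \emph{holomorphic}, not merely algebraic, category is the hard part, and to my knowledge it is not known; a purely analytic substitute would be to use the equivariant normal form $\tilde F(p)=Mp+g(p)$ with $g$ bi-periodic, together with the identical shape of $\tilde F^{-1}$, to force $\det(M+Dg)$ to be constant --- but I do not see how to carry this out without effectively recovering the structure theorem. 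I would add that the application in this paper needs the conjecture only for automorphisms of $\CC^*\times\CC^*$ that extend to automorphisms of $\CC^2$, a far more restrictive class for which a direct argument might be feasible; the conjecture is nonetheless invoked here in full generality.
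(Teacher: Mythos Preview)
The statement is labelled a \emph{conjecture} in the paper and is never proved there; it is taken as a standing hypothesis in Propositions~3 and~4. So there is no ``paper's own proof'' to compare against, and your write-up is, appropriately, a strategy sketch rather than a claimed proof.

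Your Steps~1 and~2 are correct and worth recording. The lift to the universal cover reduces the question to constancy of a $\Lambda$-periodic Jacobian on $\CC^2$, and the cohomological computation pins down its constant Laurent coefficient as $\det M=\pm1$. This also exposes a small imprecision in the paper's wording: the swap $(z,w)\mapsto(w,z)$ sends $\omega$ to $-\omega$, so ``preserves'' should be read as ``up to sign''. In the paper's application $F$ is tangent to the identity, hence $M=Id$ and the sign issue disappears.

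Your Step~3 is an honest account of the genuine gap: passing from ``constant term $\pm1$'' to ``identically $\pm1$'' is exactly the open part, and would follow from a holomorphic Jung--van der Kulk type generation theorem for $\Aut(\CC^*\times\CC^*)$ that is not known. Your closing observation is apt --- the paper only uses the conjecture for automorphisms of $\CC^*\times\CC^*$ that extend across the axes to automorphisms of $\CC^2$ tangent to the identity, a far more constrained class --- but the paper does not attempt to settle even that special case and simply assumes the conjecture.
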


Assuming this, we prove:

\begin{proposition}
If Conjecture 2 is valid, and $F$ is an automorphism of $\CC^2$ tangent to the identity that fixes the coordinate axes, then (a) and (b) are both false.
\end{proposition}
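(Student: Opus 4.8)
The plan is to extract from Conjecture~2 a single algebraic identity on $F=(f,g)$ and read off both (a) and (b) from it. Since $F\in\Aut(\CC^2)$ fixes each coordinate axis set-theoretically, its restriction to $\CC^*\times\CC^*$ is an automorphism there, so by Conjecture~2 it preserves $\tfrac{dz\wedge dw}{zw}$, which unwinds to $zw\,J_F=fg$ on $\CC^*\times\CC^*$ and hence on all of $\CC^2$. On the other hand, $F$ is an automorphism of $\CC^2$ fixing the line $\{w=0\}$ with $F(0)=0$ and $DF(0)=\mathrm{Id}$; its restriction to that line is therefore an affine automorphism of $\CC$ fixing $0$ with derivative $1$, i.e.\ the identity, and likewise on $\{z=0\}$. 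Hence $f(z,0)=z$, $g(0,w)=w$, $f(0,w)=0$, $g(z,0)=0$, so $f=z\tilde f$ and $g=w\tilde g$ with $\tilde f,\tilde g$ holomorphic, $\tilde f(0)=\tilde g(0)=1$, and with $\tilde f-1$ vanishing on $\{w=0\}$ and $\tilde g-1$ on $\{z=0\}$; in particular $f-z=zw\,h^1$, $g-w=zw\,h^2$. Plugging $fg=zw\,\tilde f\tilde g$ into $zw\,J_F=fg$ and cancelling gives the key identity
\[
J_F=\tilde f\,\tilde g .
\]

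Falsity of (a) is then immediate. Let $p$ be any fixed point of $F$. If $p$ lies on an axis, $F$ is the identity along that axis (and $DF(0)=\mathrm{Id}$ at the origin), so $1$ is an eigenvalue of $DF(p)$ and $p$ is not attracting. If $p=(z_0,w_0)$ with $z_0w_0\neq0$, then $f(p)=z_0$ forces $\tilde f(p)=1$ and $g(p)=w_0$ forces $\tilde g(p)=1$, so $\det DF(p)=J_F(p)=1$; the two eigenvalues of $DF(p)$ then multiply to $1$ and cannot both lie in the unit disc. Thus $F$ has no attracting fixed point.

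For the falsity of (b) I will compute Hakim's number $A(v)$. Write the lowest-degree homogeneous part of $F-\mathrm{id}$ as $P=(zw\,H^1,\,zw\,H^2)$, of degree $\nu+1$ with $\nu\geq1$; since $P$ vanishes on both axes, a non-degenerate characteristic direction $[v]=[a:b]$ must have $ab\neq0$. In the chart $\Phi(z,w)=(u,t):=\bigl(z/a,\ \tfrac{aw}{bz}-1\bigr)$, in which $v$ corresponds to the line $\{t=0\}$, an expansion of $\tilde F:=\Phi F\Phi^{-1}=(u',t')$ near $t=0$ gives $u'=u(1+\lambda u^{\nu}+\cdots)$ and $t'=t(1+\beta u^{\nu}+\cdots)$, where $\lambda=bH^1(a,b)=aH^2(a,b)\neq0$ by non-degeneracy, and where the relation of $\beta$ to $\lambda$ is exactly what decides whether the transverse coordinate to $v$ is contracted or expanded along orbits. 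I then compute the Jacobian of $\tilde F$ at $(u,0)$ two ways: directly from these formulas it is $1+\bigl((\nu+1)\lambda+\beta\bigr)u^{\nu}+\cdots$ (the off-diagonal entries enter only at higher order, the term $\partial t'/\partial u$ being $O(u^{\nu})$ rather than $O(u^{\nu-1})$ precisely because of the characteristic-direction relation $bH^1(a,b)=aH^2(a,b)$); while, since $J_\Phi$ is a constant multiple of $1/z$, the chain rule combined with $J_F=\tilde f\tilde g$ shows $J_{\tilde F}=\tilde g\circ\Phi^{-1}$, which on $\{t=0\}$ equals $1+\lambda u^{\nu}+\cdots$. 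Matching $u^\nu$-coefficients gives $\beta=-\nu\lambda$, hence $A(v)=-\nu<0$ in Hakim's normalisation; equivalently, along any orbit converging to $0$ tangentially to $v$ one has $u_n^{-\nu}\sim -\nu\lambda\,n$, so the transverse multiplier $1+\beta u_n^\nu+\cdots$ is asymptotically $1+\tfrac1n$ and the transverse coordinate grows like $n$ rather than tending to $0$ — so no basin can be attached to $v$, and $\operatorname{Re}A(v)\leq0$ for every non-degenerate $v$. (If $F=\mathrm{id}$ the statement is vacuous.) The step I expect to be delicate is this last chart computation: one must carry enough terms in the expansion of $\tilde F$ that the cross-derivatives provably do not disturb the $u^\nu$-coefficient of the Jacobian, and one should double-check that the normalisation built into the definition of $A(v)$ is indeed the one for which $\operatorname{Re}A(v)>0$ is the hypothesis of \cite[Theorem~5.1]{hak1}; but the latter point is merely cosmetic, since the computation shows outright that the transverse direction to $v$ is repelling, which by itself rules out the approach of (b).
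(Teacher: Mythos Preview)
Your argument is correct, and for part~(b) it is genuinely different from the paper's. Both proofs begin from the same consequence of Conjecture~2, namely $zw\,J_F=fg$, i.e.\ $J_F=\tilde f\,\tilde g$; and both dispatch~(a) in the same way (eigenvalue $1$ on the axes, $\det DF(p)=1$ off them). For~(b), however, the paper works entirely in the original coordinates: it writes $F=(ze^{wg},we^{zh})$, extracts from Conjecture~2 the relation $d_{\alpha-1,\beta}=-\tfrac{\alpha}{\beta}c_{\alpha,\beta-1}$ between the low-order coefficients of $g$ and $h$, and then carries out a direct power-series computation of Hakim's $r(u)$, $r'(u)$ and $P_{k+2}(1,u)$, obtaining $A(\theta)=-(k+1)$. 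Your route instead passes to the blow-up chart $\Phi$ aligned with~$v$ and compares the $u^{\nu}$-coefficient of $J_{\tilde F}(u,0)$ computed two ways: once from the diagonal of $D\tilde F$, once from the chain-rule identity $J_{\tilde F}=\tilde g\circ\Phi^{-1}$. This is more conceptual and avoids all the index-chasing; the paper's computation is more elementary but heavier. The two give the same value $A(v)=-(m-1)$, where $m$ is the order of $F$.

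Two points worth tightening. First, the line $\{t=0\}$ is \emph{not} $\tilde F$-invariant in general (only $t'(u,0)=O(u^{\nu+1})$), so your display $t'=t(1+\beta u^{\nu}+\cdots)$ is literally false; what you actually use, and what is true, is $\partial t'/\partial t\big|_{t=0}=1+\beta u^{\nu}+\cdots$, together with the bound $\partial t'/\partial u\big|_{t=0}=O(u^{\nu})$ (this is where the characteristic-direction relation $bH^1(a,b)=aH^2(a,b)$ enters) and $\partial u'/\partial t\big|_{t=0}=O(u^{\nu+1})$, so the off-diagonal product is $O(u^{2\nu+1})$ and does not disturb the $u^{\nu}$-coefficient. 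Second, you assert without proof that $\beta/\lambda$ equals Hakim's $A(v)$; this is correct (one checks $r'(u_0)/P_m(1,u_0)=\beta/\lambda$ by writing $r(U)=U\bigl(H^2(1,U)-UH^1(1,U)\bigr)$ and differentiating), but it should be stated or verified explicitly so the reader sees that the sign and normalisation indeed match those of \cite[Theorem~5.1]{hak1}.
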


Note that this does not answer Question 1 in general:  in principle, we could have basins associated to degenerate characteristic directions.

\bigskip

\noindent
{\bf Acknowledgments.} I would like to thanks Mattias Jonsson for helpful comments on a draft of this note and Berit Stens\o nes for useful conversations. Thanks also to the referee for useful remarks. Part of this work was done during a visit to the Institut Mittag-Leffler. Thanks to the Institut for its hospitality.

\section{Automorphisms of $\CC^*\times\CC^*$}

If $F$ is an automorphism of $\CC^2$ that fixes the coordinate axes, then we have that $F|_{\CC^*\times\CC^*}$ is an automorphism of $\CC^*\times\CC^*$.

The automorphism group of $\CC^* \times \CC^*$ has been studied before \cite{ni},\cite{var1},\cite{an1}, but remains mysterious. 

\begin{proposition}
Assume Conjecture 2 holds and let $F: \CC^2 \to \CC^2$ be an automorphism of $\CC^2$ such that:
\enumerate 
\item $F(0,0) = (0,0)$, $F'(0,0) = Id$
\item $F$ fixes the coordinate axes $\{zw=0\}$.

Then we can write $F$ as follows:
\begin{equation}\large{
F(z,w) = (z e^{wg(z,w)}, we^{zh(z,w)}) }
\end{equation}
where as power series we have:
\begin{eqnarray*}
g(z,w) = \sum_{\alpha+\beta \geq k} c_{\alpha,\beta}z^{\alpha}w^{\beta}\\
h(z,w) = \sum_{\alpha+\beta \geq k} d_{\alpha,\beta}z^{\alpha}w^{\beta}
\end{eqnarray*}
and
\begin{equation}
a_{\alpha-1,\beta} = -\frac{\alpha}{\beta}  b_{\alpha,\beta-1}
\end{equation}
for $k \leq \alpha + \beta \leq 2k$.
\end{proposition}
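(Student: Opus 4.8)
\emph{Approach.} The idea is to pin down the shape of $F$ using only the two hypotheses, and then to turn Conjecture~2 into a differential identity whose lowest--order terms encode the relation.

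I would first record that, by hypothesis, $F$ maps each axis to itself, so the restriction of $F$ to $\{w=0\}$ is an automorphism of the $z$--axis $\cong\CC$ fixing the origin, with derivative $1$ there because $DF(0,0)=Id$. Since every automorphism of $\CC$ is affine, this restriction is the identity, i.e.\ $F(z,0)=(z,0)$; symmetrically $F(0,w)=(0,w)$. Writing $F=(P,Q)$, the inclusion $F(\{z=0\})\subseteq\{z=0\}$ gives $z\mid P$, and together with $\partial_zP(0,0)=1$ this shows $P/z$ is holomorphic and nonvanishing near the origin with value $1$ there; hence $P/z=e^{A}$ with $A$ holomorphic, $A(0,0)=0$. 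As $P(z,0)=z$ we get $A(z,0)\equiv 0$, so $A=wg$ for a holomorphic $g$; likewise $Q=we^{zh}$. Tangency to the identity makes $g$ and $h$ vanish at $0$, and I would let $k$ be the largest integer with $\mathrm{ord}\,g\ge k$ and $\mathrm{ord}\,h\ge k$ (if $F=Id$ the statement is vacuous), which yields the stated power--series shape, $c_{\alpha\beta}$ and $d_{\alpha\beta}$ being the coefficients of $g$ and $h$.

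Next I would apply Conjecture~2: by the remark at the start of this section $F$ restricts to an automorphism of $\CC^*\times\CC^*$, so it preserves $\dfrac{dz\wedge dw}{zw}$, i.e.\ $\dfrac{dP\wedge dQ}{PQ}=\dfrac{dz\wedge dw}{zw}$; both sides being honest meromorphic $2$--forms on $\CC^2$, this identity holds near the origin as well. Inserting $P=ze^{A}$, $Q=we^{B}$ and expanding $\dfrac{dP\wedge dQ}{PQ}=\bigl(\tfrac{dz}{z}+dA\bigr)\wedge\bigl(\tfrac{dw}{w}+dB\bigr)$ reduces this to the scalar identity
\[
zA_z+wB_w+zw\,(A_zB_w-A_wB_z)=0,
\]
which after substituting $A=wg$, $B=zh$ and dividing by $zw$ becomes
\[
g_z+h_w+zw\,g_zh_w-(g+wg_w)(h+zh_z)=0 .
\]

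Finally I would compare homogeneous parts of total degree $k-1$ up to $2k-1$: in the last identity the two nonlinear terms have order $\ge 2k$, so those homogeneous parts of $g_z+h_w$ vanish, which, spelled out in the coefficients of $g$ and $h$, is a fixed proportionality between matched coefficients of $g$ and of $h$ in that degree range. Combined with the observation that, through degree $2k$, the nonlinear Taylor coefficients of $P$ and $Q$ coincide with those of $zwg$ and $zwh$ (the higher terms of $e^{wg}$ and $e^{zh}$ first appear in degree $>2k$), this is exactly the asserted relation $a_{\alpha-1,\beta}=-\dfrac{\alpha}{\beta}\,b_{\alpha,\beta-1}$ for $k\le\alpha+\beta\le 2k$. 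The main obstacle is this last step: the degree bookkeeping — isolating the part of the differential identity that is linear in $g,h$ (the degrees below $2k$) and tracking the index shift between the coefficients of $g,h$ and those of the components of $F$. The first three steps are routine; the only point needing a word of care there is that it is the $2$--forms, not the logarithms $\log P=\log z+A$, that are the well--defined objects.
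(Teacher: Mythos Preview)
Your argument is correct and follows the same route as the paper: show the restrictions to the axes are the identity, write the components in the exponential form, turn Conjecture~2 into the scalar identity (your expansion $g_z+h_w+zw\,g_zh_w-(g+wg_w)(h+zh_z)=0$ is exactly the paper's equation~(3)), and then read off the coefficient relation from the homogeneous parts of degree $<2k$, where only the linear piece $g_z+h_w$ survives. One small slip: tangency to the identity does \emph{not} force $g(0,0)=h(0,0)=0$, since $D(ze^{wg},we^{zh})(0,0)=Id$ for any $g,h$; thus $k=0$ is allowed (the paper treats this case as giving an empty relation), but this does not affect the rest of your argument.
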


\begin{proof}
An easy computation shows that $F(z,0) = (z,0)$ since $F|_{(z,0)}$ is an automorphism of $\CC$ and $F$ is tangent to the identity. The same argument implies $F(0,w) = (0,w)$.
If we write $F(z,w) = (f_1(z,w), f_2(z,w))$ then we have:
$f_1(z,0) = z$ and $f_2(z,0) = 0$. In the same way $f_1(0,w) = 0$ and $f_2(0,w) = w$. So we get $f_1(z,w) = z + zwr(z,w)$ and $f_2(z,w) = w + zws(z,w)$. 
So, we can write:
$$
g(z,w) := \frac{\log(1 + wr(z,w))}{w}
$$
and
$$
h(z,w) := \frac{\log(1 + zs(z,w))}{z}
$$
where $g$ and $h$ will be well defined function in $\CC^2$ and therefore we have (1).
Using Conjecture 3, we will have the following relationship between $g$, $h$ and their derivatives:
\begin{eqnarray}
g_z + h_w - gh - z gh_z - w hg_w - zw g_wh_z + zw g_z h_w= 0
\end{eqnarray}
Writing $g$ and $h$ as power series and looking at the terms of degree less than $2d$ in the left term we will have (2).
\end{proof}

\section{Basins of attraction}

We recall now Hakim's notation and results for automorphisms of $\CC^2$ tangent to the identity. 

Let $F: \CC^2 \to \CC^2$ be an automorphism of $\CC^2$ tangent to the identity. We can write $F$ as power series around the origin as follows:
$$F(z,w) = (z + P_{k}(z,w) + P_{k+1}(z,w)+..., w + Q_{k}(z,w) + Q_{k+1}(z,w)+ ...)$$
where $P_{l}$ and $Q_l$ are homogeneous polynomials of degree $l$ (or identically 0).
The \textit{order} of $F$ is the smallest $k\geq 2$ such that $(P_{k}(z,w),Q_{k}(z,w))$ does not vanish identically.

\begin{defin}
A \textit{characteristic direction} is a direction $v \neq 0$ in $\CC^2$ such that 
$$(P_k(v), Q_k(v)) = \lambda v$$ 
for some $\lambda \in \CC$, where $k$ is the order of $F$.
When $\lambda \neq 0$  then we call $v$ a \textit{non-degenerate} characteristic direction. Similarly if $\lambda =0$ then $v$ is a \textit{degenerate} characteristic direction
\end{defin}

For $v \in \CC^2$ a non-degenerate characteristic direction we can assume without loss of generality $v = (1,u_0)$ with $P_{k}(1,u_0) \neq 0$.

If we define:
$$
r(u) := Q_k(1,u) - uP_k(1,u)
$$
then the non-degenerate characteristic directions of $F$ are the zeroes of the polynomial function~$r$.

To each non-degenerate characteristic direction we associate the number:
\begin{eqnarray*}
A(v) := \frac{r'(u_0)}{P_{k}(1,u_0)}
\end{eqnarray*}

Then Hakim proves \cite[Thm. 5.1 + Remark 5.3]{hak1}:

\begin{theorem}
Let $F$ be an automorphism of $\CC^2$ tangent to the identity. Let $v$ be a non-degenerate characteristic direction. Assume that $\textrm{Re} A(v) > 0$. Then there exists an invariant attracting domain $D$ in which every point is attracted to the origin along a trajectory tangent to $v$. Then the open set:
$$
\mathcal{D} = \cup_{n=0}^{\infty}F^{-n}(D)
$$
is attracted to $0$ and $\mathcal{D}$ is biholomorphic to $\CC^2$.
\end{theorem}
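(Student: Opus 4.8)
The plan is to carry out the change of variables that is standard for parabolic germs and that Hakim uses in \cite{hak1}, reducing the dynamics near $v$ to a one-dimensional parabolic germ in one coordinate plus a controllable transverse coordinate. After a linear change I may assume $v=(1,0)$ (so $u_0=0$), and after rescaling that $a:=P_k(1,0)\neq 0$. Setting $u=w/z$ (so $w=zu$), a direct computation of $F$ in the coordinates $(z,u)$ gives
\begin{align*}
z_1 &= z + a z^{k} + O(z^{k+1},\, z^{k}u),\\
u_1 &= u\bigl(1 + b\, z^{k-1} + \cdots\bigr) + O(z^{k}),
\end{align*}
where $b:=r'(0)$ and $A(v)=b/a$. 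Passing to $t=z^{k-1}$ (a branch of the $(k-1)$-st root on a suitable sector) turns the first line into a classical parabolic germ $t_1 = t + (k-1)a\,t^{2}+\cdots$, for which the Leau--Fatou flower theorem supplies an attracting petal $\Pi$ with $t_n\to 0$ and $t_n\sim \frac{-1}{(k-1)a\,n}$ along it.

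Next I would control the transverse coordinate on the region where $t=z^{k-1}\in\Pi$. From $|u_{n+1}/u_n| = |1 + b\,t_n+\cdots|$ and the asymptotics of $t_n$ one finds $\sum_n \log|1+b\,t_n| \sim -\frac{\mathrm{Re}\,A(v)}{k-1}\log n$, so the hypothesis $\mathrm{Re}\,A(v)>0$ forces $|u_n|\to 0$, in fact polynomially. This is precisely where the non-degeneracy number enters: it prevents orbits starting near the direction $v$ from drifting away in the $u$-direction, so that they converge to $0$ tangentially to $v$.

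With these estimates in hand I would define the invariant domain $D$ explicitly, for instance as the image under $(z,u)\mapsto(z,zu)$ of a set of the form $\{\, z^{k-1}\in\Pi',\ |u|<\delta\,\}$ with $\Pi'\subset\Pi$ a slightly smaller petal and $\delta$ small, and then verify $F(D)\subset D$ and $F^{n}|_{D}\to 0$. To identify $D$, hence $\mathcal D=\bigcup_{n\geq 0}F^{-n}(D)$, with $\CC^{2}$, I would construct Fatou-type coordinates: a limit $\Phi=\lim_{n}(\text{affine normalization})\circ F^{n}$ conjugating $F|_{D}$ to a model map such as $(\zeta,\eta)\mapsto(\zeta+1,\eta)$ on a product (half-plane)$\times\CC$; taking the union of backward iterates then exhausts the half-plane factor, giving $\mathcal D\cong\CC\times\CC=\CC^{2}$.

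The main obstacle is this last step, together with the uniformity of the estimates above: one has to show that the error terms ($O(z^{k+1})$, $O(z^{k}u)$, etc.) destroy neither the invariance of $D$ nor the convergence of the Fatou coordinate, which requires choosing $\Pi'$ and $\delta$ carefully and bounding $|F^{n+1}-F^{n}|$ along orbits; proving that the resulting $\Phi$ is injective with the asserted image --- so that $\mathcal D$ really is all of $\CC^{2}$ rather than a proper subset --- is the technically delicate heart of the argument.
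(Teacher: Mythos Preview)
The paper does not prove this theorem at all: it is quoted verbatim from Hakim \cite[Thm.~5.1 and Remark~5.3]{hak1} and then used as a black box in the proof of Proposition~2. There is therefore no proof in the paper to compare your proposal against.

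That said, your outline is a fair summary of the strategy Hakim actually follows in \cite{hak1}: pass to projective coordinates $(z,u)$ with $u=w/z$, so that the $z$-coordinate behaves like a one-variable parabolic germ on a Leau--Fatou petal and the transverse $u$-coordinate is governed by the multiplier $A(v)$; then build Fatou-type coordinates on an invariant sector and saturate by preimages. One small technical slip: the substitution you describe, $t=z^{k-1}$, does give $t_1=t+(k-1)a\,t^{2}+\cdots$, but the asymptotic $t_n\sim -1/((k-1)a\,n)$ you quote is the one for the \emph{inverse} Fatou coordinate (equivalently for $1/t$), so be careful with signs when you plug it into the $u$-estimate. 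And, as you yourself flag, the genuinely hard part---showing the Fatou coordinate is a biholomorphism onto a half-plane times $\CC$, so that $\mathcal D\cong\CC^2$---is not something a sketch at this level settles; that is precisely the content of Hakim's paper.
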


With this result in mind we prove Proposition 2.

\begin{proof} [Proof of Prop. 2]
We will first prove that there are no attracting fixed points.
If we assume Conjecture 2, an easy computation shows that the Jacobian of $F$ will be:
\begin{eqnarray}
JF(z,w) = e^{wg(z,w) + zf(z,w)}
\end{eqnarray}
The fixed points for $F$ are:
\begin{itemize}
\item $(0,0)$ where $DF(0,0) = Id$, therefore the origin is not attracting.
\item $(z_0,0)$ and $(0,w_0)$. For these points an easy computation shows that they are semi-attracting (or semi-repelling) fixed points (i.e. the eigenvalues are $1$ and $\lambda$); therefore not attracting either.
\item $(z_0,w_0)$ not necessarily on the axes, where $e^{w_0g(z_0,w_0)} = e^{z_0h(z_0,w_0)} = 1$. Using equation (4) we have: $JF(z_0,w_0) = 1$, therefore $(z_0,w_0)$ will not be an attracting fixed point either (in case $DF(z_0,w_0) = Id$ see Remark 7).
\\
\end{itemize}

Now we prove that (b) is not possible.
We have:

\begin{equation}\large{
F (z,w) = (z e^{wg(z,w)}, we^{zh(z,w)}) }
\end{equation}\\

We assume that the lowest degree in $g$ (and therefore in $h$) is $k$. 
Then we have:
$$
g(z,w) = \sum_{\alpha + \beta =k} c_{\alpha,\beta}z^{\alpha}w^{\beta} + h.o.t.
$$
and
$$
h(z,w) = \sum_{\alpha + \beta =k} d_{\alpha,\beta}z^{\alpha}w^{\beta} + h.o.t.
$$
where
$$
d_{\alpha-1,\beta} = -\frac{\alpha}{\beta}  c_{\alpha,\beta-1}
$$

Therefore the order of $F$ is $k+2$:
\begin{eqnarray*}
F(z,w) &=& (z e^{wg(z,w)}, we^{zh(z,w)}) \\
&=& \left(z (1 + wg(z,w) + O(w^2g^2)),  w (1 + zh(z,w) + O(z^2h^2))\right)\\
&=& \left( z + zw\sum_{\alpha + \beta =k} c_{\alpha,\beta}z^{\alpha}w^{\beta} + O(|(z,w)|^{k+3}), w + zw\sum_{\alpha + \beta =k} c_{\alpha,\beta}z^{\alpha}w^{\beta} + O(|(z,w)|^{k+3}) \right)\\
\end{eqnarray*}

We will call the lowest degree homogeneous terms $P_{k+2}$ and $Q_{k+2}$ as in Hakim's notation:
\begin{eqnarray}
P_{k+2} (z,w) &=&  \sum_{\alpha + \beta =k} c_{\alpha,\beta}z^{\alpha+1}w^{\beta+1} \\
Q_{k+2} (z,w) &=&  \sum_{\alpha + \beta =k} d_{\alpha,\beta}z^{\alpha+1}w^{\beta+1}
\end{eqnarray}

Now we want to compute characteristic directions and the numbers associated to the non-degenerate ones. (Note that we have: $(P_{k+2}(1,0), Q_{k+2}(1,0)) = (0,0)$ and $(P_{k+2}(0,1), Q_{k+2}(0,1)) = (0,0)$; therefore $(1,0)$ and $(0,1)$ are degenerate characteristic directions).

From now on we assume that $v = (1, \theta)$ is a non-degenerate characteristic direction, with $\theta \neq 0$. So  we want to solve:
\begin{eqnarray*}
r(u) &=& Q_{k+2}(1,u) - u P_{k+2}(1,u) = 0
\end{eqnarray*}
Putting this back in (5) and (6) we get:
\begin{eqnarray*}
r(u) &=& u \left( \sum_{\beta =0}^k d_{k -\beta,\beta}u^{\beta} - \sum_{\beta= 0}^k c_{k-\beta,\beta}u^{\beta +1}\right)\\
&=& u\left( d_{k,0} + \sum_{\beta=1}^k(d_{k-\beta,\beta}-c_{k-\beta+1,\beta-1}) u^\beta - c_{0,k}\lambda^{k+1}\right)
\end{eqnarray*}

Therefore the characteristic directions will be $(1,\theta)$ where $s(\theta) = 0$ for $r(u) = u s(u)$ i.e.
\begin{eqnarray*}
s(u) = d_{k,0} + \sum_{\beta=1}^k(d_{k-\beta,\beta}-c_{k-\beta+1,\beta-1}) u^\beta - c_{0,k}\lambda^{k+1}
\end{eqnarray*}
and so we have:
\begin{equation}
s(\theta) = d_{k,0} + \sum_{\beta=1}^k(d_{k-\beta,\beta}-c_{k-\beta+1,\beta-1}) \theta^\beta - c_{0,k}\theta^{k+1} = 0
\end{equation}

Now we can compute the numbers associated to each direction with the following formula:
\begin{eqnarray}
A(\theta)&=& \frac{r'(\theta)}{P_{k+2}(1,\theta)}
\end{eqnarray}

Since we know:
\begin{equation*}
r(u) = d_{k,0}u + \sum_{\beta=1}^k(d_{k-\beta,\beta}-c_{k-\beta+1,\beta-1}) u^{\beta+1} - c_{0,k}u^{k+2}
\end{equation*}
then we can easily get:
\begin{equation*}
r ' (u) = d_{k,0} + \sum_{\beta=1}^k(\beta+1) (d_{k-\beta,\beta}-c_{k-\beta+1,\beta-1})u^{\beta} - (k+2)c_{0,k}u^{k+1}
\end{equation*}

Putting this back in (9) together with (6), we have:
\begin{eqnarray}
A(\theta) &=& \frac{d_{k,0} + \sum_{\beta=1}^k(\beta + 1) (d_{k-\beta,\beta}-c_{k-\beta+1,\beta-1}) \theta^{\beta} - (k+2)c_{0,k}\theta^{k+1}}{ \sum^k_{\beta = 0} c_{k-\beta,\beta}\theta^{\beta+1}}
\end{eqnarray}

(When $k=0$ the sum from $\beta = 1$ to $k$ is the empty sum.)

By (8) we have:
\begin{equation}
 d_{k,0} = - \sum_{\beta=1}^k(d_{k-\beta,\beta}-c_{k-\beta+1,\beta-1}) \theta^\beta + c_{0,k}\theta^{k+1}
\end{equation}
and putting this in (10) we can have some cancelations:
\begin{eqnarray*}
A(\theta) &=& \frac{- \sum_{\beta=1}^k(d_{k-\beta,\beta}-c_{k-\beta+1,\beta-1}) \theta^\beta + c_{0,k}\theta^{k+1} + \sum_{\beta=1}^k(\beta + 1) (d_{k-\beta,\beta}-c_{k-\beta+1,\beta-1}) 
\theta^{\beta} - (k+2)c_{0,k}\theta^{k+1}}{ \sum_{\beta = 0}^k c_{k-\beta,\beta}\theta^{\beta+1}}
\end{eqnarray*}

So after simplifying and canceling $\theta$ (since is not equal to $0$), we have:
\begin{eqnarray}
A(\theta)&=& \frac{\sum_{\beta=1}^k\beta (d_{k-\beta,\beta}-c_{k-\beta+1,\beta-1}) \theta^{\beta} - (k+1)c_{0,k}\theta^{k+1}}{ \sum_{\beta = 0}^k c_{k-\beta,\beta}\theta^{\beta+1}}\nonumber\\
&=& \frac{\theta \left( \sum_{\beta=1}^k \beta (d_{k-\beta,\beta}-c_{k-\beta+1,\beta-1}) \theta^{\beta-1} - (k+1)c_{0,k}\theta^k\right)}{ \theta\left(\sum_{\beta = 0}^k c_{k-\beta,\beta}\theta^\beta\right)}\nonumber\\
&=& \frac{\sum_{\beta=1}^k\beta (d_{k-\beta,\beta}-c_{k-\beta+1,\beta-1}) \theta^{\beta-1} - (k+1)c_{0,k}\theta^k}{\sum_{\beta = 0}^k c_{k-\beta,\beta}\theta^\beta}
\end{eqnarray}

If the conjecture holds true, then we have:
\begin{eqnarray*}
d_{\alpha-1, \beta} = -\frac{\alpha}{\beta}c_{\alpha, \beta-1}
\end{eqnarray*}
for $k \leq \alpha + \beta - 1 \leq 2k$. So, using $\alpha = k + 1 - \beta$ we have:
\begin{eqnarray*}
d_{k - \beta, \beta} = -\frac{k-\beta +1}{\beta}c_{k- \beta+1, \beta-1}
\end{eqnarray*}
(For $k = 0$ the condition is empty, but the result still holds.)
So, back in our equation (12):
\begin{eqnarray*}
A(\theta)&=& \frac{\sum_{\beta=1}^k \beta (-\frac{k-\beta +1}{\beta}c_{k- \beta+1, \beta-1}-c_{k-\beta+1,\beta-1}) \theta^{\beta-1} - (k+1)c_{0,k}\theta^k}{\sum^k_{\beta = 0} c_{k-\beta,\beta}\theta^\beta}\\
&=& \frac{-(k+1)\sum_{\beta = 0}^k c_{k-\beta,\beta}\theta^\beta}{\sum_{\beta = 0}^k c_{k-\beta,\beta}\theta^\beta}\\
&=& -(k+1)
\end{eqnarray*}

This finishes the proof of Proposition 2.
\end{proof}\

\begin{remark}
If $(z_0,w_0)$ is a fixed point different from the origin, where $DF(z_0,w_0) = Id$, then we can use the same technique to show that the result above still holds i.e. non-degenerate characteristic directions at $(z_0,w_0)$ will have a negative number associated to them. 
\end{remark}

\end{document}